\documentclass[11pt]{amsart}%
\usepackage{eurosym}
\usepackage{graphicx,amscd,color,amsmath,amsfonts,amssymb,amsthm,centernot,geometry}
\usepackage[pdftex,plainpages=false,pdfpagelabels,colorlinks=true,citecolor=black,linkcolor=black,
urlcolor=black,filecolor=black,bookmarksopen=true,unicode=true,breaklinks=true]%
{hyperref}
\usepackage[initials]{amsrefs}
\usepackage[all]{xy}
\usepackage{amsmath}
\usepackage{amsfonts}
\usepackage{amssymb}
\usepackage{graphicx}
\usepackage{mathtools}%
\setcounter{MaxMatrixCols}{30}
\providecommand{\U}[1]{\protect\rule{.1in}{.1in}}
\newtheorem{theorem}{Theorem}

\newtheorem{lemma}{Lemma}

\geometry{left=2.5cm,right=2.5cm,top=3cm,bottom=3cm}
\begin{document}
\title[ ]{On the spaceability of the set of functions in the Lebesgue space $L_p$ which are in no other $L_q$}
\author[Ara\'{u}jo]{G. Ara\'{u}jo}
\address[G. Ara\'{u}jo]{Departamento de Matem\'{a}tica \\
Universidade Estadual da Para\'{\i}ba \\
58.429-500 Campina Grande, Brazil.}
\email{gustavoaraujo@servidor.uepb.edu.br}
\author[Barbosa]{A. Barbosa}
\address[A. Barbosa]{Departamento de Matem\'atica \\
Universidade Federal da Para\'iba \\
58.051-900 Jo\~ao Pessoa, Brazil.}
\email{afsb@academico.ufpb.br}
\author[Raposo Jr.]{A. Raposo Jr.}
\address[A. Raposo Jr.]{Departamento de Matem\'atica \\
Universidade Federal do Maranh\~ao \\
65.085-580 S\~ao Lu\'is, Brazil.}
\email{anselmo.junior@ufma.br}
\author[Ribeiro]{G. Ribeiro}
\address[G. Ribeiro]{Departamento de Matem\'atica \\
Universidade Federal da Para\'iba \\
58.051-900 Jo\~ao Pessoa, Brazil.}
\email{geivison.ribeiro@academico.ufpb.br}
\thanks{\textit{Conflict of interest.} The authors declare that they have no conflict of interest.}
\thanks{The first author was partially supported by Grant 3024/2021, Para\'iba State Research Foundation (FAPESQ). The fourth author was supported by Grant 2022/1962, Para\'iba State Research Foundation (FAPESQ). This study was financed in part by the Coordenação de Aperfeiçoamento de Pessoal de Nível Superior - Brasil (CAPES) - Finance Code 001}

\begin{abstract}
In this note we prove that, for $p>0$, $L_{p}[0,1]\smallsetminus\bigcup_{q\in(p,\infty)}L_{q}[0,1]$ is $(\alpha,\mathfrak{c})$-spaceable if, and only if, $\alpha<\aleph_{0}$. Such
a problem first appears in [V. F\'avaro, D. Pellegrino, D. Tomaz, Bull. Braz.
Math. Soc. \textbf{51} (2020) 27-46], where the authors get the
$(1,\mathfrak{c})$-spaceability of $L_{p}[0,1]\smallsetminus\bigcup_{q\in(p,\infty)}%
L_{q}[0,1]$ for $p>0$. The definitive answer to this problem continued to be
sought by other authors, and some partial answers were obtained. The
veracity of this result was expected, as a similar result is known for
sequence spaces.
\end{abstract}

\keywords{Lineability, spaceability, measurable functions}
\subjclass{15A03, 46B87, 46E30}
\maketitle


\section{Introduction and motivation}

From now on all vector spaces are considered over a fixed scalar field
$\mathbb{K}$ which can be either $\mathbb{R}$ or $\mathbb{C}$. For any set $X$
we shall denote by $\operatorname*{card}(X)$ the cardinality of $X$; we also
define $\mathfrak{c}=\operatorname*{card}(\mathbb{R})$ and $\aleph
_{0}=\operatorname*{card}(\mathbb{N})$.

If $E$ is a vector space, $\beta\leq\dim(E)$ is a cardinal number and
$A\subset E$, then $A$ is said to be $\beta$\textit{-lineable} if there exists
a vector space $F_{\beta}$ with $\dim(F_{\beta})=\beta$ and $F_{\beta
}\smallsetminus\{0\}\subset A$. If $E$ is, in addition, endowed with a
topology, then $A$ is called $\beta$\textit{-spaceable} if $A\cup\{0\}$
contains a closed $\beta$-dimensional linear subspace of $E$ (see \cite{AGS}).
Also, if $\alpha$ is another cardinal number, with $\alpha\leq\beta$, then $A$
is said to be $(\alpha,\beta)$\textit{-spaceable} if it is $\alpha$-lineable
and for every subspace $F_{\alpha}\subset E$ with $F_{\alpha}\subset
A\cup\{0\}$ and $\dim(F_{\alpha})=\alpha$, there is closed subspace $F_{\beta
}\subset E$ with $\dim(F_{\beta})=\beta$ and $F_{\alpha}\subset F_{\beta
}\subset A\cup\{0\}$ (see \cite{FPT}).

The concept of \textit{lineability} was coined by V. I. Gurariy in the early
2000's and it first appeared in print in \cites{GQ,AGS}. V. I. Gurariy's
interest in linear structures in generally non-linear settings dates as far
back as 1966 (see \cite{gurariy1966}). The study of large vector structures in
sets of real and complex functions has attracted many mathematicians in the
last decade. For example, for $0<p\leq\infty$, in 2008 \cite{mpps2008}, 2009
\cite{agps2009}, 2010 \cite{bernalppp}, 2011 \cite{BDFP}, 2012 \cite{BFPS,bc2}%
, 2020 \cite{FPT} and 2021 \cite{DR}, Aron, Bernal-Gonz\'{a}lez, Botelho,
Diniz, F\'{a}varo, Garc\'{\i}a-Pacheco, Mu\~{n}oz-Fern\'{a}ndez,
Ord\'{o}\~{n}ez-Cabrera, Palmberg, Pellegrino, P\'{e}rez-Garc\'{\i}a, Puglisi,
Raposo Jr., Seoane-Sep\'{u}lveda and Tomaz proved a set of interesting
spaceability results concerning the vector space $L_{p}(\mu,\Omega)$ of all
(Lebesgue classes of) measurable functions $f\colon\Omega\rightarrow
\mathbb{K}$ such that
\[%
\begin{cases}
|f|^{p}\text{ is }\mu\text{-integrable on }\Omega\text{,} & \text{for
}0<p<\infty\text{,}\\
f\text{ is }\mu\text{-essentially bounded in }\Omega\text{,} & \text{for
}p=\infty\text{,}
\end{cases}
\]
(here $(\Omega,\mathcal{M},\mu)$ is a measure space). Below we will recall
some of the results mentioned above. Before, recall that $L_{p}(\mu,\Omega)$,
$0<p<\infty$, becomes a Banach space (quasi-Banach if $p<1$) under the norm
($p$-norm if $p<1$)
\[
\Vert f\Vert_{p}=\left(  \int_{\Omega}|f|^{p}d\mu\right)  ^{\frac{1}{p}%
}\text{.}%
\]
If $p=\infty$, $L_{\infty}(\mu,\Omega)$ becomes a Banach space under the norm
\[
\Vert f\Vert=\inf\{M>0:|f|\leq M\ \mu\text{-almost everywhere in }X\}\text{.}%
\]
As usual, if $\Omega=I\subset\mathbb{R}$ and $\mu=$ the Lebesgue measure, we
denote $L_{p}(\mu,I)$ by $L_{p}(I)$, and if $\Omega=\mathbb{N}$ and $\mu=$ the
counting measure, we denote $L_{p}(\mu,\mathbb{N})$ by $\ell_{p}$.


The following results are well known:

\begin{theorem}
[\cite{FPT}]\label{222} For all $0<p\leq\infty$ the set
\[
\ell_{p}\smallsetminus\bigcup_{q\in\left(  0,p\right)  }\ell_{q}%
\]
is $(\alpha,\mathfrak{c})$-spaceable in $\ell_{p}$ if, and only if,
$\alpha<\aleph_{0}$.
\end{theorem}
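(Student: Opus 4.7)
Write $A:=\ell_p\smallsetminus\bigcup_{q\in(0,p)}\ell_q$. The plan is to split the proof into two halves: the ``only if'' direction amounts to exhibiting one countably-infinite-dimensional subspace of $A\cup\{0\}$ that admits no closed extension inside $A\cup\{0\}$, and the ``if'' direction requires enlarging an arbitrary \emph{finite}-dimensional subspace $F_\alpha\subset A\cup\{0\}$ to a closed subspace of dimension $\mathfrak c$ lying in $A\cup\{0\}$.

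For the non-spaceability when $\alpha\ge\aleph_0$, I would partition $\mathbb N$ into infinite pairwise-disjoint sets $M_0,M_1,M_2,\dots$, place on each $M_n$ (for $n\ge 1$) a vector $h_n\in\ell_p\smallsetminus\bigcup_{q<p}\ell_q$ with $\operatorname{supp}(h_n)=M_n$ and $\|h_n\|_p\le 2^{-n}$ (obtained by scaling down a standard model in $A$ supported on $M_n$), and fix a nonzero finitely-supported $g$ with $\operatorname{supp}(g)\subset M_0$, so that $g\in\bigcap_{r>0}\ell_r$. Setting $f_n:=h_n+g$, the disjointness of the $M_n$'s gives linear independence of $\{f_n\}$ and forces every nontrivial combination to lie in $A$: if $f=\sum c_n f_n$ has some $c_k\ne 0$, then $f|_{M_k}=c_k h_k$ already has infinite $\ell_q$-norm for every $q<p$. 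Yet $\|f_n-g\|_p=\|h_n\|_p\to 0$, so $f_n\to g$ in $\ell_p$; any closed subspace containing $\{f_n\}$ thus contains the non-$A$ vector $g$, and $F_0:=\operatorname{span}\{f_n\}$ admits no closed extension inside $A\cup\{0\}$, a fortiori none of dimension $\mathfrak c$. For $\alpha>\aleph_0$ one lifts $F_0$ to an $\alpha$-dimensional subspace of $A\cup\{0\}$ by a Hamel-type extension using the known $\mathfrak c$-lineability of $A$; the same obstruction $g$ continues to prevent a closed $\mathfrak c$-dimensional extension.

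For the positive direction with $\alpha$ finite, fix a basis $g_1,\dots,g_\alpha$ of $F_\alpha$. Because each $g_i\in\ell_p$ satisfies $g_i(k)\to 0$, a diagonal construction produces an infinite $S\subset\mathbb N$ with infinite complement such that $|g_i(s_j)|\le 2^{-j}$ for every $i$ and every $s_j\in S$; consequently $g_i|_S\in\bigcap_{q>0}\ell_q$, and by linearity $f|_S\in\bigcap_{q>0}\ell_q$ for every $f\in F_\alpha$. Since any nonzero $f\in F_\alpha$ lies in $A$, one has $\|f\|_q=\infty$ for every $q<p$, and the decomposition $\|f\|_q^q=\|f|_S\|_q^q+\|f|_{\mathbb N\smallsetminus S}\|_q^q$ forces $f|_{\mathbb N\smallsetminus S}\notin\bigcup_{q<p}\ell_q$. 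I would then partition $S$ into infinite blocks $(S_n)_{n\ge 1}$, choose unit vectors $w_n\in A$ with $\operatorname{supp}(w_n)=S_n$, and let $W:=\overline{\operatorname{span}}\{w_n\}$; the disjointness of supports makes $W$ isometric to $\ell_p$, hence closed of dimension $\mathfrak c$, and each nonzero element of $W$ lies in $A$ by the same disjoint-support argument. The subspace $F_{\mathfrak c}:=F_\alpha+W$ is closed (finite-dimensional plus closed), contains $F_\alpha$, and has dimension $\mathfrak c$ because $F_\alpha\cap W=\{0\}$ by the support condition; and every nonzero $f+w\in F_{\mathfrak c}$ satisfies $(f+w)|_{\mathbb N\smallsetminus S}=f|_{\mathbb N\smallsetminus S}\notin\bigcup_{q<p}\ell_q$, which forces $f+w\in A$.

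The main technical hurdle is the joint selection of $S$ in the positive direction, where one must simultaneously tame every element of a finite-dimensional space; the diagonal argument above collapses it to the $\ell_p$-decay of a finite basis. A subtler point is the case $p=\infty$, where elements of $\ell_\infty$ need not tend to zero. There I would replace the diagonal extraction by an inductive splitting of $\mathbb N$ into two infinite parts on which each nonzero vector of $F_\alpha$ still fails to lie in any $\ell_q$, $q<\infty$; the ``only if'' step needs no change, since $h_n=n^{-1}\chi_{M_n}$ still realises $\|h_n\|_\infty\to 0$ with $h_n\in A$.
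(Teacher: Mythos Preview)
The paper does not contain a proof of Theorem~\ref{222}; the result is quoted from \cite{FPT} as motivation, so there is no in-paper argument to compare your proposal against. That said, your outline for $0<p<\infty$ is essentially correct: the disjoint-support ``bad'' subspace $F_0=\operatorname{span}\{f_n\}$ with $f_n\to g\notin A$ handles the negative direction (the extension to $\alpha>\aleph_0$ is made precise by reserving an infinite block of $\mathbb N$ disjoint from $\bigcup M_n$ and placing an $\alpha$-dimensional subspace of $A\cup\{0\}$ there), and the diagonal thinning $|g_i(s_j)|\le 2^{-j}$ together with the disjointly supported $w_n$'s gives the positive direction. One small slip: for $p=\infty$ the closed span of disjointly supported unit vectors is isometric to $c_0$, not $\ell_\infty$, though this does not affect the conclusion.

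The genuine gap is the positive direction for $p=\infty$. Your diagonal extraction needs $g_i(k)\to 0$, which fails in $\ell_\infty$ (take $g_1\equiv 1$), and your replacement---``an inductive splitting of $\mathbb N$ into two infinite parts on which each nonzero vector of $F_\alpha$ still fails to lie in any $\ell_q$''---is a restatement of the goal, not a construction. You cannot force $f|_S\in\bigcap_q\ell_q$ in general, so the complement trick $\|f\|_q^q=\|f|_S\|_q^q+\|f|_T\|_q^q$ is unavailable. One way to repair this is to build $T$ block by block: enumerate pairs $(m,q)\in\mathbb N^2$, and at each step use compactness of the unit sphere of $F_\alpha$ to find a finite set $E\subset\mathbb N$ (disjoint from what has been used) with $\sum_{k\in E}|f(k)|^{q}\ge 1$ for every unit $f\in F_\alpha$; interleave single points into $S$ to keep it infinite. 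Then $\sum_{k\in T}|f(k)|^{q}=\infty$ for every $q\in\mathbb N$ and every unit $f$, hence for every $q<\infty$ by the inclusion $\ell_q\subset\ell_{q'}$ for $q<q'$. Without some such argument, the $p=\infty$ case is incomplete.
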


\begin{theorem}
[\cite{BFPS}]\label{boteee} The set
\begin{equation}
L_{p}[0,1]\smallsetminus\bigcup_{q\in\left(  p,\infty\right)  }L_{q}[0,1]
\label{lp-lq}%
\end{equation}
is spaceable for all $0<p<\infty$.
\end{theorem}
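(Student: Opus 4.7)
The plan is to exhibit an infinite-dimensional closed subspace $F \subset L_p[0,1]$ with $F \setminus \{0\}$ contained in the set (\ref{lp-lq}). The strategy is to build disjointly supported ``bad'' building blocks and take their closed linear span, so that the $L_p$-limit of linear combinations inherits, on each piece of the support, the pathology of a single block.

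First I would fix a template function with the right singular behaviour. A natural candidate is
\[
h(x) = x^{-1/p}\bigl(\log(e/x)\bigr)^{-\beta},\qquad x \in (0,1],
\]
with $\beta > 1/p$ (so $p\beta > 1$). The substitution $u = \log(e/x)$ gives $\int_0^1 |h|^p\,dx = \int_1^{\infty} u^{-p\beta}\,du < \infty$, while for any $q > p$ the integrand $|h|^q$ behaves near $0$ like $x^{-q/p}$ times a logarithmic factor; since $q/p > 1$ this is non-integrable, so $h \in L_p[0,1]$ but $h \notin L_q[0,1]$ for any $q>p$.

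Next, I would choose a partition of $[0,1]$ into pairwise disjoint measurable sets $(A_n)_{n\in\mathbb{N}}$ of positive measure, and for each $n$ define $f_n$ by transplanting a suitably rescaled affine image of $h$ onto $A_n$, normalising so that $\|f_n\|_p = 1$ and $\operatorname{supp}(f_n) \subset A_n$. Each $f_n$ inherits the singularity of $h$ and so fails to belong to $L_q(A_n)$ for any $q > p$. Because the $f_n$ are normalised and pairwise disjointly supported, the closed linear span
\[
F = \overline{\operatorname{span}\{f_n : n \in \mathbb{N}\}}^{L_p}
\]
is isometrically isomorphic to $\ell_p$ (in the usual quasi-Banach sense when $p<1$); in particular $F$ is an infinite-dimensional closed subspace of $L_p[0,1]$.

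Finally, I would verify that every nonzero $f \in F$ lies in the target set. Any such $f$ has the form $f = \sum_{n} c_n f_n$ with $(c_n) \in \ell_p$, the series converging unconditionally in $L_p$; by disjointness of supports, $f = c_n f_n$ almost everywhere on $A_n$ for each $n$. If $f \neq 0$ then $c_{n_0} \neq 0$ for some $n_0$, and then
\[
\int_{A_{n_0}} |f|^q\,dx \;=\; |c_{n_0}|^q \int_{A_{n_0}} |f_{n_0}|^q\,dx \;=\; \infty
\]
for every $q > p$, which forces $f \notin L_q[0,1]$. I expect the only genuinely delicate point to be this passage from finite linear combinations to arbitrary limits in $F$: the disjoint-support design is precisely what neutralises it, since $L_p$-convergence of partial sums forces the limit to agree with $c_n f_n$ on each $A_n$ up to a null set, and hence the singularity of $f_{n_0}$ cannot be cancelled or smoothed out by any other $f_n$.
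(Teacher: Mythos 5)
Your proposal is correct. Note, however, that the paper does not prove this statement at all: Theorem \ref{boteee} is quoted from \cite{BFPS}, so there is no ``paper proof'' to match it against, only the technique the paper uses for its main result. Your argument --- explicit singular blocks $x^{-1/p}(\log(e/x))^{-\beta}$ with $\beta>1/p$, transplanted onto pairwise disjoint pieces (which should be taken to be intervals, e.g.\ $A_n=(\tfrac{1}{n+1},\tfrac{1}{n}]$, so that the affine rescaling makes literal sense), normalized, with closed span isometric to $\ell_p$ (or $p$-normed $\ell_p$ when $p<1$) --- is essentially the standard disjoint-support construction of \cite{BFPS}, and the passage from finite combinations to limits is handled correctly: restriction to $A_{n_0}$ is norm-decreasing, the span of $f_{n_0}$ is one-dimensional hence closed, so any nonzero limit agrees with $c_{n_0}f_{n_0}$ a.e.\ on $A_{n_0}$ and inherits the non-$L_q$ singularity. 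The paper's own proof of Theorem \ref{t27} uses the same decomposition $(0,1]=\bigcup_k I_k$ and disjointly supported blocks, but it cannot use a single explicit template: to get $(\alpha,\mathfrak{c})$-spaceability it must enlarge an \emph{arbitrary} prescribed finite-dimensional subspace $\operatorname{span}\{g_1,\dots,g_n\}$, which is why it replaces your concrete $f_n$ by abstractly chosen $\tilde f_k$ obtained from the infinite codimension of $\bigcup_{q>p}L_q(I_k)$ in $L_p(I_k)$ and a projection argument guaranteeing that no combination $\tilde f_k+\sum_i a_i g_i|_{I_k}$ falls into some $L_q(I_k)$. So your route suffices for the cited spaceability statement, but not for the stronger theorem the paper is actually after.
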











The proof of Theorem \ref{boteee} does not guarantee that $L_{p}%
[0,1]\smallsetminus\bigcup_{q\in\left(  p,\infty\right)  }L_{q}[0,1]$ is
$(\alpha,\mathfrak{c})$-spaceable for some cardinal $\alpha>0$. A result by
F\'{a}varo et al. \cite{FPT} shows that this is true for $\alpha=1$ and in the
same article they ask about the $(\alpha,\mathfrak{c})$-spaceability of the
set in \eqref{lp-lq} for a cardinal $1<\alpha<\mathfrak{c}$ (this same issue
is again highlighted in \cite{FPR}). Later on, in \cite{fprs}, F\'{a}varo et
al. proved that the set in \eqref{lp-lq} is not $(\alpha,\beta)$-spaceable for
$\alpha\geq\aleph_{0}$, regardless of the cardinal number $\beta$.

Summarizing all the information above, we have the following question:

\medskip

\begin{center}
\begin{minipage}{14cm}
For $0<p<\infty$ and $2\leq\alpha<\aleph_{0}$, is the set $L_{p}[0,1]\smallsetminus\bigcup_{q\in\left(  p,\infty\right)  }L_{q}[0,1]$ $(\alpha,\mathfrak{c})$-spaceable?
\end{minipage}
\end{center}

\medskip

In view of Theorem \ref{222}, many authors conjectured the veracity of this question. In this note, using a slightly different technique than the one usually used in this type of problem, namely the \textit{mother vector technique}, we answer the above question.

\section{Main result}

\begin{theorem}
\label{t27} For all $0< p<\infty$ the set
\[
L_{p}[0,1]\smallsetminus\bigcup_{q\in\left(  p,\infty\right)  }L_{q}[0,1]
\]
is $(\alpha,\mathfrak{c})$-spaceable in $L_{p}[0,1]$ if, and only if,
$\alpha<\aleph_{0}$.
\end{theorem}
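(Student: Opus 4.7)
Since Fávaro et al.\ already proved in \cite{fprs} that the set in question is not $(\alpha,\beta)$-spaceable for $\alpha\geq\aleph_0$ and any $\beta$, the only direction requiring a new proof is that $L_p[0,1]\smallsetminus\bigcup_{q>p}L_q[0,1]$ is $(\alpha,\mathfrak c)$-spaceable for each finite $\alpha$. My plan: fix an $\alpha$-dimensional subspace $F_\alpha$ of $L_p[0,1]$ with $F_\alpha\smallsetminus\{0\}$ inside the bad set, fix a basis $f_1,\dots,f_\alpha$, and build a closed $\mathfrak c$-dimensional subspace $W$ supported on a small piece of $[0,1]$ so that $V:=F_\alpha\oplus W$ is closed, $\mathfrak c$-dimensional, and has $V\smallsetminus\{0\}$ inside the bad set.

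The step I expect to be the heart of the matter is a \emph{uniform margin of linear independence}: I claim there exists $\delta>0$ such that whenever a measurable $U\subset[0,1]$ satisfies $|[0,1]\smallsetminus U|<\delta$, the restrictions $f_1|_U,\dots,f_\alpha|_U$ remain linearly independent in $L_p(U)$. Equivalently, I need $\inf\{|\operatorname{supp} f|:f\in F_\alpha,\ \|f\|_p=1\}>0$. I would prove this by contradiction: a minimizing sequence lies on the unit sphere of the finite-dimensional $F_\alpha$, so by compactness it has an $L_p$-convergent subsequence with limit of unit $p$-norm; after extracting a further subsequence along which the supports have summable measure, a Borel--Cantelli argument shows the pointwise a.e.\ limit is $0$, contradicting unit $p$-norm.

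With $\delta$ secured, choose pairwise disjoint measurable sets $J_1,J_2,\ldots\subset[0,1]$ of positive measure with $\sum_n|J_n|<\delta$. Chebyshev's inequality applied to the $f_i|_{J_n}$ produces $\tilde J_n\subset J_n$ with $|\tilde J_n|\geq|J_n|/2$ and a constant $M_n$ such that every $|f_i|\leq M_n$ on $\tilde J_n$. On each $\tilde J_n$, localize a mother vector (e.g.\ a scaled and translated $x^{-1/p}(\log(1/x))^{-2/p}$ around a Lebesgue density point) to obtain $\phi_n\in L_p[0,1]$, $\operatorname{supp}\phi_n\subset\tilde J_n$, $\|\phi_n\|_p=1$, and $\phi_n\notin L_q$ for every $q>p$. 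Let $W:=\overline{\operatorname{span}}\{\phi_n:n\geq 1\}$. Disjointness of supports makes $W$ isometric to $\ell_p$, hence closed and of Hamel dimension $\mathfrak c$; the margin from the previous step forces $F_\alpha\cap W=\{0\}$ (any nonzero element of $F_\alpha$ is nonzero on the complement of $\bigcup_nJ_n$, which has measure $>1-\delta$, whereas every element of $W$ vanishes there), so $V=F_\alpha\oplus W$ is closed and $\mathfrak c$-dimensional.

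Finally I would check $V\smallsetminus\{0\}\subset L_p[0,1]\smallsetminus\bigcup_{q>p}L_q[0,1]$. Take $h=f+g$ with $f\in F_\alpha$, $g=\sum_n d_n\phi_n\in W$, $h\neq 0$; membership $h\in L_p$ is automatic. If $g=0$, $h=f$ is bad by hypothesis. If $f=0$, picking $n$ with $d_n\neq 0$ gives $\int_{\tilde J_n}|h|^q=|d_n|^q\int_{\tilde J_n}|\phi_n|^q=\infty$ for every $q>p$. In the remaining case, pick $n$ with $d_n\neq 0$; on $\tilde J_n$ one has $|f|\leq C_n$ for a constant $C_n$ depending only on $f$ and $M_n$, while $\phi_n$ is essentially unbounded, so on $\tilde J_n\cap\{|\phi_n|\geq 2C_n/|d_n|\}$ one has $|h|\geq|d_n\phi_n|/2$; since $\phi_n$ is bounded on the complementary set in $\tilde J_n$ (which has finite measure), the equality $\int_{\tilde J_n}|\phi_n|^q=\infty$ transfers to $\int_{\tilde J_n}|h|^q=\infty$. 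Hence $h\notin L_q$ for any $q>p$, completing the proof.
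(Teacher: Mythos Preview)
Your proof is correct and follows a genuinely different route from the paper's. The paper partitions $(0,1]$ into the intervals $I_k=(\tfrac{1}{k+1},\tfrac{1}{k}]$ and, on each $I_k$, invokes the fact that $\bigcup_{q>p}L_q(I_k)$ is a subspace of infinite codimension in $L_p(I_k)$ to pick an algebraic complement $V_k$ and then a vector $\tilde f_k\in V_k$ lying outside the $V_k$-projection of $\operatorname{span}\{g_i|_{I_k}\}$; this purely algebraic choice is what forces $\tilde f_k+\sum a_ig_i|_{I_k}\notin\bigcup_{q>p}L_q(I_k)$ for all scalars. The bulk of the paper's work is then identifying every element of the closure $\overline{T(\ell_{\tilde p})}$ as an explicit series $\sum a_ig_i+\sum a_{n+i}f_i$, which requires a separate lemma (proved by pigeonholing on subsets of $\{1,\dots,n\}$) that the restrictions $g_i|_{\cup_{i\le m}I_i}$ are eventually linearly independent. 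Your argument replaces all of this algebra by analysis: compactness of the unit sphere of $F_\alpha$ gives the uniform support margin $\delta$, Chebyshev lets you shrink each $J_n$ to a subset $\tilde J_n$ on which every basis vector is bounded, and a Lebesgue density point lets you plant an explicit mother vector there whose restriction still escapes every $L_q$; the verification that $V\smallsetminus\{0\}$ stays bad is then a direct pointwise comparison on $\tilde J_n$, with no need to track coefficients of limit points. What the paper's approach buys is that one never needs to exhibit an explicit function in $L_p(I_k)\smallsetminus\bigcup_{q>p}L_q(I_k)$ nor to do any measure-theoretic tailoring; what yours buys is a shorter and more self-contained argument that avoids the complement/projection machinery and the coefficient-identification step in the closure. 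Both proofs hinge on $\alpha<\aleph_0$ in the same place: you through compactness of the unit sphere, the paper through the finiteness of $\mathcal P(\{1,\dots,n\})$.
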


\begin{proof}
From the previous discussion the question remains open only for $2\leq
\alpha<\aleph_{0}$.

Let $g_{1},\ldots,g_{n}\in L_{p}[0,1]$ be linearly independent normalized
vectors so that%
\[
\operatorname*{span}\{g_{1},\ldots,g_{n}\}\smallsetminus\{0\}\subset
L_{p}[0,1]\smallsetminus\bigcup_{q\in\left(  p,\infty\right)  }L_{q}%
[0,1]\text{.}%
\]
Let us consider the representation of the semi-open interval  $(0,1]$ as the following disjoint union
\[
(0,1]=\bigcup_{k=1}^{\infty}I_{k},
\]
where $I_{k}:=\left(  \frac{1}{k+1},\frac{1}{k}\right]$. Let us fix  $k\in\mathbb{N}$. Since $\bigcup_{q\in\left(  p,\infty\right)  }L_{q}%
 (I_{k})$ is a vector subspace of $L_{p}(I_{k})$ and $\bigcup_{q\in\left(  p,\infty\right)
 }L_{q}(I_{k})$ has infinite codimension (see \cite[Theorem 4.4]{Bernal}), we can take an infinite dimensional subspace $V_{k}$ of
$L_{p}(I_{k})$ so that
\[
L_{p}(I_{k})=V_{k}\oplus\bigcup_{q\in\left(  p,\infty\right)  }L_{q}%
(I_{k})\text{.}%
\]
Now, consider the canonical projection $P_{k}\colon L_{p}(I_{k})\rightarrow V_{k}$
of $L_{p}(I_{k})$ onto $V_{k}$ and let%
\[
\tilde{f}_{k}\in V_{k}\smallsetminus P_{k}\left(  \operatorname*{span}%
\{g_{1}|_{I_{k}},\ldots,g_{n}|_{I_{k}}\}\right)
\]
with $\Vert\tilde{f}_{k}\Vert_{p}=1$. Let us prove that, for all $a_{1},\ldots
,a_{n}\in\mathbb{K}$,
\begin{equation}
\tilde{f}_{k}+\sum_{i=1}^{n}a_{i}g_{i}|_{I_{k}}\notin\bigcup_{q\in\left(
p,\infty\right)  }L_{q}(I_{k}) \label{eq}.
\end{equation}
In fact, if there exists $a_{1},\ldots
,a_{n}\in\mathbb{K}$ such that $\tilde{f}_{k}+\sum_{i=1}^{n}a_{i}g_{i}|_{I_{k}}\in\bigcup
_{q\in\left(  p,\infty\right)  }L_{q}(I_{k})$, since
\[
\tilde{f}_{k}+\sum_{i=1}^{n}a_{i}g_{i}|_{I_{k}}=\tilde{f}_{k}+P_{k}\left(
\sum_{i=1}^{n}a_{i}g_{i}|_{I_{k}}\right)  +\left(  -P_{k}\left(  \sum
_{i=1}^{n}a_{i}g_{i}|_{I_{k}}\right)  +\sum_{i=1}^{n}a_{i}g_{i}|_{I_{k}%
}\right)  \text{,}%
\]
we would conclude that $\tilde{f}_{k}+P_{k}\left(  \sum_{i=1}^{n}a_{i}%
g_{i}|_{I_{k}}\right)  =0$ and, hence, $\tilde{f}_{k}\in P_{k}\left(
\operatorname*{span}\{g_{1}|_{I_{k}},\ldots,g_{n}|_{I_{k}}\}\right)  $, which 
we know doesn't happen.

Define $\tilde{p}=1$ if $p \geq 1$ and $\tilde{p}=p$ if $0<p<1$. Furthermore, consider $f_{k}\in L_{p}[0,1]\smallsetminus\bigcup_{q\in\left(  p,\infty\right)
}L_{q}[0,1]$, where
\[
f_{k}=
\begin{cases}
0 & \text{in } [0,1]\smallsetminus I_{k}\\
\tilde{f}_{k} & \text{in } I_{k}.
\end{cases}
\]
For $\left(a_{i}\right)_{i=1%
}^\infty\in\ell_{\tilde{p}}$,
\[
\Vert a_{1}g_{1}\Vert_{p}^{\tilde{p}}+\cdots+\Vert a_{n}g_{n}\Vert_{p}^{\tilde{p}}+\sum_{i=n+1}%
^{\infty}\Vert a_{i}f_{i-n}\Vert_{p}^{\tilde{p}}=\sum_{i=1}^{\infty}|a_{i}|^{\tilde{p}}<\infty\text{.}%
\]
Since $L_{p}[0,1]$ is a Banach space for $p\geq1$ and a quasi Banach space for $0<p<1$, it follows that
$a_{1}g_{1}+\cdots+a_{n}g_{n}+\sum_{i=n+1}^{\infty}a_{i}f_{i-n}\in L_{p}[0,1]$. Therefore we can define the operator
\[
T\colon\ell_{\tilde{p}}\rightarrow L_{p}[0,1]\text{, \ \ }\ \ T\left(  \left(a_{i}\right)_{i=1%
}^\infty\right)  =a_{1}g_{1}+\cdots+a_{n}g_{n}+\sum_{i=n+1}^{\infty
}a_{i}f_{i-n}.
\]
For an arbitrary function $f\colon X\rightarrow\mathbb{K}$ whose
domain is an arbitrary set $X$, let $\operatorname{supp}\left(  f\right)
=\{x\in X:f(x)\neq0\}$. Since $\operatorname{supp}(f_{i})\cap
\operatorname{supp}(f_{j})=\varnothing$ for $i\neq j$, we can conclude that
$T(\ell_{\tilde{p}})$ has infinite dimension.

Below we will show that there exists a positive integer $m_{0}$ such that
\[
\{g_{1}|_{\cup_{i=1}^{m_0}I_{i}},\ldots,g_{n}|_{\cup_{i=1}^{m_0}I_{i}}%
,f_{1}|_{\cup_{i=1}^{m_0}I_{i}},\ldots,f_{m_0}|_{\cup_{i=1}^{m_0}I_{i}}\}
\]
is a linearly independent set in $L_{p}(\bigcup_{i=1}^{m_0}I_{i})$. We first need to prove the following lemma:

\begin{lemma}
\label{lema} There exist a positive integer $m_{1}$ such that
\[
\{g_{1}|_{\cup_{i=1}^{m_1}I_{i}},\ldots,g_{n}|_{\cup_{i=1}^{m_1}I_{i}}\}
\]
is a linearly independent set in $L_{p}(\bigcup_{i=1}^{m_1}I_{i})$.
\end{lemma}

\begin{proof}
[Proof of Lemma \ref{lema}] Fix $j\in\{1,\ldots,n\}$. Since $g_{j}|_{\cup
_{i=1}^{m}I_{i}}\overset{m\rightarrow\infty}{\longrightarrow}g_{j}$ in
$L_{p}[0,1]$, we have $g_{j}|_{\cup_{i=1}^{m}I_{i}}\neq0$ for all large enough $m$.
By contradiction, suppose there is not a positive integer $m_{1}$ such that
$\{g_{1}|_{\cup_{i=1}^{m_1}I_{i}},\ldots,g_{n}|_{\cup_{i=1}^{m_1}I_{i}}\}$
is linearly independent in $L_{p}(\bigcup_{i=1}^{m_1}I_{i})$. Thus, the set $\{g_{1}|_{\cup_{i=1}^{m}I_{i}},\ldots
,g_{n}|_{\cup_{i=1}^{m}I_{i}}\}$ is linearly dependent on $L_{p}(\bigcup
_{i=1}^{m}I_{i})$ for all $m\in\mathbb{N}$. For each $m\in\mathbb{N}%
$, let $\{g_{1(m)}|_{\cup_{i=1}^{m}I_{i}},\ldots,g_{r(m)}|_{\cup
_{i=1}^{m}I_{i}}\}$ be a smaller linearly dependent subset of $\{g_{1}%
|_{\cup_{i=1}^{m}I_{i}},\ldots,g_{n}|_{\cup_{i=1}^{m}I_{i}}\}$ and define
$\varphi\colon\mathbb{N}\rightarrow\mathcal{P}(\{1,\ldots,n\})$ by
$\varphi(m)=\{1(m),\ldots,r(m)\}$, where $\mathcal{P}(\{1,\ldots,n\})$ is the
set of all subsets of $\{1,\ldots,n\}$. Since $\operatorname*{card}%
(\mathcal{P}(\{1,\ldots,n\}))<\operatorname*{card}(\mathbb{N})=\aleph_{0}%
$, there is $\{j_{1},\ldots,j_{r}\}\in\varphi(\mathbb{N})$ such that
$\operatorname*{card}(\varphi^{-1}(\{j_{1},\ldots,j_{r}\}))=\aleph_{0}$.
Define $\mathbb{N}^{\prime}:=\varphi^{-1}(\{j_{1},\ldots,j_{r}%
\})\subset\mathbb{N}$ and note that
\[
\{g_{1(m)}|_{\cup_{i=1}^{m}I_{i}},\ldots,g_{r(m)}|_{\cup_{i=1}^{m}I_{i}%
}\}=\{g_{j_{1}}|_{\cup_{i=1}^{m}I_{i}},\ldots,g_{j_{r}}|_{\cup_{i=1}^{m}I_{i}%
}\}\text{.}%
\]
Thus, if $m,\tilde{m}\in\mathbb{N}^{\prime}$ are such that $m<\tilde{m}$, then there are $b_{1}%
,\ldots,b_{r-1},\tilde{b}_{1},\ldots,\tilde{b}_{r-1}\in\mathbb{K}$ so that
\[
g_{j_{r}}|_{\cup_{i=1}^{m}I_{i}}=b_{1}g_{j_{1}}|_{\cup_{i=1}^{m}I_{i}}%
+\cdots+b_{r-1}g_{j_{r-1}}|_{\cup_{i=1}^{m}I_{i}}%
\]
and
\begin{equation}
g_{j_{r}}|_{\cup_{i=1}^{\tilde{m}}I_{i}}=\tilde{b}_{1}g_{j_{1}}|_{\cup
_{i=1}^{\tilde{m}}I_{i}}+\cdots+\tilde{b}_{r-1}g_{j_{r-1}}|_{\cup
_{i=1}^{\tilde{m}}I_{i}}\text{.} \label{eq3}%
\end{equation}
Restricting \eqref{eq3} to $\bigcup_{i=1}^{m}I_{i}$ we get
\begin{align*}
\tilde{b}_{1}g_{j_{1}}|_{\cup_{i=1}^{m}I_{i}}+\cdots+\tilde{b}_{r-1}%
g_{j_{r-1}}|_{\cup_{i=1}^{m}I_{i}}  &  =g_{j_{r}}|_{\cup_{i=1}^{m}I_{i}}\\
&  =b_{1}g_{j_{1}}|_{\cup_{i=1}^{m}I_{i}}+\cdots+b_{r-1}g_{j_{r-1}}%
|_{\cup_{i=1}^{m}I_{i}}%
\end{align*}
and consequently
\[
(\tilde{b}_{1}-b_{1})g_{j_{1}}|_{\cup_{i=1}^{m}I_{i}}+\cdots+(\tilde{b}%
_{r-1}-b_{r-1})g_{j_{r-1}}|_{\cup_{i=1}^{m}I_{i}}=0\text{.}%
\]
Since $\{g_{j_{1}}|_{\cup_{i=1}^{m}I_{i}},\ldots,g_{j_{r}}|_{\cup_{i=1}%
^{m}I_{i}}\}$ is a smaller linearly dependent subset of $\{g_{1}|_{\cup
_{i=1}^{m}I_{i}},\ldots,g_{n}|_{\cup_{i=1}^{m}I_{i}}\}$ we can conclude that
$\tilde{b}_{k}=b_{k}$, $k=1,\ldots,r-1$. 
Since $m\in\mathbb{N}^\prime$ is arbitrary, we obtain
\[
g_{j_{r}}|_{\cup_{i=1}^{m}I_{i}}=b_{1}g_{j_{1}}|_{\cup_{i=1}^{m}I_{i}}%
+\cdots+b_{r-1}g_{j_{r-1}}|_{\cup_{i=1}^{m}I_{i}}%
\]
for all $m\in\mathbb{N}^\prime$. 

Therefore
\begin{align*}
g_{j_{r}}  &=\lim\limits_{m\in\mathbb{N}^{\prime}} g_{j_{r}}|_{\cup_{i=1}^{m}I_{i}}\\
&=\lim\limits_{m\in\mathbb{N}^{\prime}}\left(  b_{1}g_{j_{1}}%
|_{\cup_{i=1}^{m}I_{i}}+\cdots+b_{r-1}g_{j_{r-1}}|_{\cup_{i=1}^{m}I_{i}%
}\right) \\
&  =b_{1}g_{j_{1}}+\cdots+b_{r-1}g_{j_{r-1}}\text{,}%
\end{align*}
which is contrary to the fact that $\{g_{1},\ldots,g_{n}\}$ is linearly independent.
\end{proof}

Let us return to the proof of Theorem \ref{t27}. Let us prove that the set
\[
\{g_{1}|_{\cup_{i=1}^{m}I_{i}},\ldots,g_{n}|_{\cup_{i=1}^{m}I_{i}}%
,f_{1}|_{\cup_{i=1}^{m}I_{i}},\ldots,f_{m}|_{\cup_{i=1}^{m}I_{i}}\}
\]
is linearly independet in $L_{p}(\bigcup_{i=1}^{m}I_{i})$ for all $m\geq
m_{0}$, where $$m_{0}=\min\left\{m_{1}:\{g_{1}|_{\cup_{i=1}^{m}I_{i}},\ldots
,g_{n}|_{\cup_{i=1}^{m}I_{i}}\} \text{ is linearly independent in } L_{p}\left(\bigcup_{i=1}^{m}I_{i}\right)\right\}.$$

Given $m\geq m_0$, let $b_{1},\ldots,b_{n},b_{n+1},\ldots,b_{n+m}\in\mathbb{K}$ such that
\[
b_{1}g_{1}|_{\cup_{i=1}^{m}I_{i}}+\cdots+b_{n}g_{n}|_{\cup_{i=1}^{m}I_{i}%
}+b_{n+1}f_{1}|_{\cup_{i=1}^{m}I_{i}}+\cdots+b_{n+m}f_{m}|_{\cup_{i=1}%
^{m}I_{i}}=0\text{,}%
\]
i.e.,
\begin{equation}
b_{1}g_{1}|_{\cup_{i=1}^{m}I_{i}}+\cdots+b_{n}g_{n}|_{\cup_{i=1}^{m}I_{i}%
}=-b_{n+1}f_{1}|_{\cup_{i=1}^{m}I_{i}}-\cdots-b_{n+m}f_{m}|_{\cup_{i=1}%
^{m}I_{i}}\text{.} \label{eq5}%
\end{equation}
Restricting the equality in \eqref{eq5} to $I_{j}$, $j=1,\ldots,m$, we have
\[
b_{1}g_{1}|_{I_{j}}+\cdots+b_{n}g_{n}|_{I_{j}}=-b_{n+j}\tilde{f}_{j}\text{,}
\]
i.e., $-b_{n+j}\tilde{f}_{j}=P_{j}\left(  b_{1}g_{1}|_{I_{j}}+\cdots+b_{n}%
g_{n}|_{I_{j}}\right)  \in P_{j}\left(  \operatorname*{span}\{g_{1}|_{I_{j}%
},\ldots,g_{n}|_{I_{j}}\}\right)  $, and we can conclude that $b_{n+j}=0$. From \eqref{eq5} we have
\[
b_{1}g_{1}|_{\cup_{i=1}^{m}I_{i}}+\cdots+b_{n}g_{n}|_{\cup_{i=1}^{m}I_{i}%
}=0\text{,}%
\]
and from the Lemma \ref{lema} we obtain $b_{1}=\ldots=b_n=0$.

Now let us see that
\[
\overline{T(\ell_{\tilde{p}})}\smallsetminus\{0\}\subset L_{p}[0,1]\smallsetminus
\bigcup_{q\in\left(  p,\infty\right)  }L_{q}[0,1]\text{.}%
\]
Indeed, given $h\in\overline{T(\ell_{\tilde{p}})}\smallsetminus\{0\}$, let $(a_{i}^{(k)})_{i=1}^\infty\in\ell_{\tilde{p}}$ ($k\in\mathbb{N}$) such that
\[
T\left(  (a_{i}^{(k)})_{i=1%
}^\infty\right)  \overset{k\rightarrow\infty
}{\longrightarrow}h\ \text{in }L_{p}[0,1]\text{.}%
\]
Observe that $T\left(  (a_{i}^{(k)})_{i=1%
}^\infty\right)  |_{I}\overset{k\rightarrow\infty}{\longrightarrow}h|_{I}$ in
$L_{p}(I)$ for any subinterval $I$ of $[0,1]$. In order to go further, the strategy shall be to prove that there is a sequence of scalars $(a_{i})_{i\in\mathbb{N}}$ such that
\[
a_{1}g_{1}+\cdots+a_{n}g_{n}+\sum_{i=1}^{\infty}a_{n+i}f_{i}=h.
\]
In fact, for a fixed $m\geq m_0$, note that
\begin{align*}
a_{1}^{(k)}g_{1}|_{\cup_{i=1}^{m}I_{i}}+\cdots+a_{n}^{(k)}g_{n}|_{\cup_{i=1}^{m}I_{i}}   +a_{n+1}^{(k)}f_{1}|_{\cup_{i=1}^{m}I_{i}}+\cdots+ \ & a_{n+m}^{(k)}f_{m}|_{\cup_{i=1}^{m}I_{i}}\\
& =T\left(  \left(a_{i}^{(k)}\right)_{i=1}^\infty\right)  |_{\cup_{i=1}^{m}I_{i}}\overset{k\rightarrow\infty}{\longrightarrow}h|_{\cup_{i=1}^{m}I_{i}}\text{,}
\end{align*}
and that $\mathrm{span}\left\{g_{1}|_{\cup_{i=1}^{m}I_{i}},\ldots,g_{n}|_{\cup_{i=1}^{m}I_{i}},f_{1}|_{\cup_{i=1}^{m}I_{i}},\ldots,f_{m}|_{\cup_{i=1}^{m}I_{i}}\right\}$ is finite dimensional on $L_{p}(\bigcup_{i=1}^{m}I_{i})$. Since every finite-dimensional subspace of a topological vector space is closed, there are scalars $a_1(m),\ldots, a_{n+m}(m)$ such that
\begin{equation}\label{beq5}
h|_{\cup_{i=1}^{m}I_{i}}=a_{1}(m)g_{1}|_{\cup_{i=1}^{m}I_{i}}+\cdots+a_{n}(m)g_{n}|_{\cup_{i=1}^{m}I_{i}}+a_{n+1}(m)f_{1}|_{\cup_{i=1}^{m}I_{i}}+\cdots+a_{n+m}(m)f_{m}|_{\cup_{i=1}^{m}I_{i}},
\end{equation}
Obviously the same reasoning can be applied to $\tilde{m}>m$ and therefore
\begin{equation}\label{beq6}
h|_{\cup_{i=1}^{\tilde{m}}I_{i}}=a_{1}(\tilde{m})g_{1}|_{\cup_{i=1}^{\tilde{m}}I_{i}}+\cdots+a_{n}(\tilde{m})g_{n}|_{\cup_{i=1}^{\tilde{m}}I_{i}}+a_{n+1}(\tilde{m})f_{1}|_{\cup_{i=1}^{\tilde{m}}I_{i}}+\cdots+a_{n+\tilde{m}}(\tilde{m})f_{\tilde{m}}|_{\cup_{i=1}^{\tilde{m}}I_{i}}.
\end{equation}
Restricting \eqref{beq6} to $\bigcup_{i=1}^{m}I_{i}$ and comparing with \eqref{beq5} we get
\begin{align*}
&a_{1}(\tilde{m})g_{1}|_{\cup_{i=1}^{m}I_{i}}+\cdots+a_{n}(\tilde{m})g_{n}|_{\cup_{i=1}^{m}I_{i}}+a_{n+1}(\tilde{m})f_{1}|_{\cup_{i=1}^{m}I_{i}}+\cdots+a_{n+m}(\tilde{m})f_{m}|_{\cup_{i=1}^{m}I_{i}}\\
&=h|_{\cup_{i=1}^{m}I_{i}}\\
&=a_{1}(m)g_{1}|_{\cup_{i=1}^{m}I_{i}}+\cdots+a_{n}(m)g_{n}|_{\cup_{i=1}^{m}I_{i}}+a_{n+1}(m)f_{1}|_{\cup_{i=1}^{m}I_{i}}+\cdots+a_{n+m}(m)f_{m}|_{\cup_{i=1}^{m}I_{i}}.
\end{align*}
Since the set $\left\{g_{1}|_{\cup_{i=1}^{m}I_{i}},\ldots,g_{n}|_{\cup_{i=1}^{m}I_{i}},f_{1}|_{\cup_{i=1}^{m}I_{i}},\ldots,f_{m}|_{\cup_{i=1}^{m}I_{i}}\right\}$ is linearly independent, we obtain $a_j(m)=a_j(\tilde{m})$ for every $j=1,\ldots,n+m$. Thus we conclude that there is a sequence of scalars $(a_{i})_{i=1}^{\infty}
$ such that
\begin{align*}
\left(  a_{1}g_{1}+\cdots+a_{n}g_{n}+\sum_{i=1}^{\infty}a_{n+i}f_{i}\right)
|_{\cup_{i=1}^{m}I_{i}}  &  =(a_{1}g_{1}+\cdots+a_{n}g_{n})|_{\cup_{i=1}%
^{m}I_{i}}+\left(  \sum_{i=1}^{m}a_{n+i}f_{i}\right)  |_{\cup_{i=1}^{m}I_{i}%
}\\
&  =h|_{\cup_{i=1}^{m}I_{i}}%
\end{align*}
and so we finally have
\[
a_{1}g_{1}+\cdots+a_{n}g_{n}+\sum_{i=1}^{\infty}a_{n+i}f_{i}=h.
\]

Since $h\neq0$, it follows that $\left(a_{i}\right)_{i=1}^\infty\neq0$. Therefore, if
$a_{n+i}=0$ for all $i\in\mathbb{N}$, we have
\[
h=a_{1}g_{1}+\cdots+a_{n}g_{n}\in\operatorname*{span}\{g_{1},\ldots
,g_{n}\}\smallsetminus\{0\}\subset L_{p}[0,1]\smallsetminus\bigcup
_{q\in\left(  p,\infty\right)  }L_{q}[0,1]\text{.}%
\]

On the other hand, if $a_{n+i}\neq0$ for some $i\in\mathbb{N}$, from \eqref{eq} we obtain
\[
\frac{1}{a_{n+i}}h|_{I_{i}}=\tilde{f}_{i}+\frac{1}{a_{n+i}}\left(  a_{1}%
g_{1}+\cdots+a_{n}g_{n}\right)  |_{I_{i}}\notin\bigcup_{q\in\left(
p,\infty\right)  }L_{q}(I_{i}).
\]
Consequently, $h\notin\bigcup_{q\in\left(  p,\infty\right)  }L_{q}[0,1]$ and the result is done.
\end{proof}

\end{document}